\newtheorem{thm}{Theorem}[section] \newtheorem{lemma}[thm]{Lemma}
\newtheorem{cor}[thm]{Corollary} \newtheorem{prop}[thm]{Proposition}
\theoremstyle{definition}
\theoremstyle{remark}
\newcommand\newdot{{\kern.8pt\cdot\kern.8pt}}
\newcommand\nbull{{\kern.8pt\raise1.5pt\hbox{\bf .}\kern.8pt}}
\font\sevenrm=cmr7
\newcommand\E{\mathbb{E}}
\newcommand\R{\mathbb{R}}
\newbox\ovlbox
\def\ovl#1{\setbox\ovlbox\hbox{$#1$}\rlap{\kern.5\wd\ovlbox\kern-1.5pt
  $\overline{\hbox to4pt{\hss$\phantom{#1}$\hss}}$\hss}#1}
\def\unl#1{\setbox\ovlbox\hbox{$#1$}\rlap{\kern.5\wd\ovlbox\kern-2.5pt
  $\underline{\hbox to4pt{\hss$\phantom{#1}$\hss}}$\hss}#1}
\def\acong{\mathrel{\mathpalette\@avereq\sim}} 
\def\@avereq#1#2{\lower.5\p@\vbox{\baselineskip\z@skip\lineskip-.5\p@
    \ialign{$\m@th#1\hfil##\hfil$\crcr#2\crcr\longrightarrow\crcr}}}
\def\mequal{\mathrel{\mathpalette\@mvereq{\hbox{\sevenrm m}}}}
\def\@mvereq#1#2{\lower.5\p@\vbox{\baselineskip\z@skip\lineskip1.5\p@
    \ialign{$\m@th#1\hfil##\hfil$\crcr#2\crcr=\crcr}}}
\def\relop#1#2#3{\mathrel{\mathop{\kern\z@ #2}\limits^{#1}_{#3}}}
\newcommand\1{\hbox{\kern.375em\vrule height1.57ex depth-.1ex
    width.05em\kern-.375em \rm 1}}
\def\vol{{\operatorname{vol}}} 
\def\Hess{{\operatorname{Hess}}}\def\Ric{{\operatorname{Ric}}}
\def\End{{\operatorname{End}}}\def\inj{{\operatorname{inj}}}
\def\e{\operatorname{e}}
 \DeclareMathOperator{\trace}{trace}
\def\mathpal#1{\mathop{\mathchoice{\text{\rm #1}}%
    {\text{\rm #1}}{\text{\rm #1}}%
    {\text{\rm #1}}}\nolimits} \def\id{{\mathpal{id}}}
\def\boxit#1{\vbox{\hrule\hbox{\vrule\kern3pt
      \vbox{\kern3pt#1\kern3pt}\kern3pt\vrule}\hrule}}
\let\oldtocsection=\tocsection \let\oldtocsubsection=\tocsubsection
\renewcommand{\tocsection}[2]{\hspace{0em}\oldtocsection{#1}{#2}}
\renewcommand{\tocsubsection}[2]{\hspace{2em}\oldtocsubsection{#1}{#2}}
\numberwithin{equation}{section}
\begin{document}
	\title[Quantitative $C^1$-estimates]{Quantitative $C^1$-estimates by Bismut Formulae}
  
   \author{Li-Juan Cheng\textsuperscript{1,2}, Anton
    Thalmaier\textsuperscript{1} and James Thompson\textsuperscript{1}}

  \address{\textsuperscript{1}Mathematics Research Unit, FSTC, University of Luxembourg\\
    Maison du Nombre, 6, Avenue de la Fonte,\\ 4364 Esch-sur-Alzette, Grand Duchy of Luxembourg}
  \address{\textsuperscript{2}Department of Applied Mathematics, Zhejiang University of Technology\\
    Hangzhou 310023, The People's Republic of China}
  \email{lijuan.cheng@uni.lu \text{and} chenglj@zjut.edu.cn}
  \email{anton.thalmaier@uni.lu}
  \email{james.thompson@uni.lu}

\begin{abstract}\noindent
For a $C^2$ function $u$ and an elliptic operator $L$, we prove a quantitative estimate for the derivative $du$ in terms of local bounds on $u$ and $Lu$. An integral version of this estimate is then used to derive a condition for the zero-mean value property of $\Delta u$. An extension to differential forms is also given. Our approach is probabilistic and could easily be adapted to other settings.
\end{abstract}

\keywords{Elliptic operator, Gradient estimate, Ricci curvature} \subjclass[2010]{58J05, 58J65, 60J60} \date\today

\maketitle

\section{Introduction}

\subsection{}
Suppose $M$ is a complete and connected Riemannian manifold of dimension $n$. Denote by $\rho$ the Riemannian distance function. Denote by $\nabla$ the Levi-Civita connection, by $\Delta$ the Laplace-Beltrami operator and for a smooth vector field $Z$ consider the elliptic operator $L := \frac12\Delta +Z$. We will prove that if $u \in C^2(M)$ then for any regular domains $D_0 \subset D$ with $r_0 := \rho(D_0,\partial D)$ and $\delta >0$ we have
\begin{equation}\label{eq:mainres}
\sup_{D_0}|du| \leq C \left( \frac{2\delta}{r_0} \sup_D |u|  + \frac{r_0}{2\delta} \sup_D |2Lu| \right)
\end{equation}
for an explicit constant $C$. While it is straightforward to obtain such an estimate without any control on the constant, our constant is explicit and depends on the geometry of $D$ only via a lower bound on Ricci curvature (and some assumptions on $Z$; for the precise form of the constant, see Theorem \ref{thm:mainthm}). An estimate of this type was recently proved using analytic methods by G\"{u}neysu and Pigola in \cite{GP2017}, but with the constant depending on sectional curvature.

Our approach, on the other hand, is probabilistic in nature. Probabilistic approaches to gradient estimates are typically based either on a derivative formula, such as the original one proved by Bismut \cite{bismut}, or coupling methods, as introduced by Cranston \cite{Cranston}. One advantage of these stochastic methods is the ease with which they can be used to obtain local estimates with explicit constants, such as for example the gradient estimates for harmonic functions of \cite{TW,ADT}, the Li-Yau type estimates and Harnack inequalities of \cite{AT,ATW} and a range of other functional inequalities for solutions to the heat equation with various boundary conditions; see the monograph \cite{Wangbook} and references therein for further reading. Although there is indeed a long history of using stochastic analysis to study such things, it is nonetheless surprising that explicit estimates of the type \eqref{eq:mainres} can be obtained from the stochastic analysis of Brownian motion in such a simple and straight-forward way.

Our new approach yields eigenfunction estimates (see Corollary \ref{cor:mainthm2}) and can be extended to differential forms (see Theorem \ref{thm:estforforms} and Corollary \ref{cor:eigestforforms} in Section \ref{sec:forms}) and vector bundles. The same approach can also be used to obtain global integral estimates for the symmetric case, which we present in Section \ref{sec:intestimates}. In particular, we will prove for each $p>1$ that if the Ricci curvature is bounded below then
\begin{equation}\label{eq:mainres2}
\| du \|_{L^p(M)} \leq C(p) \left( 2\delta\, \| u \|_{L^p(M)} + \frac{1}{2\delta} \, \| \Delta u \|_{L^p(M)}\right)
\end{equation}
where the constant $C(p)$ is again explicit (see Theorem \ref{thm:mainthm3}), although as explained in \cite{GPlp} the Ricci curvature lower bound is actually redundant if $p \in (1,2]$.

G\"{u}neysu and Pigola observed in \cite{GP2017} that an estimate of the type \eqref{eq:mainres} can be used to derive conditions for the vanishing of the integral of $\Delta u$. This approach involves sublinear volume growth and a control on sectional curvature. An alternative set of conditions, avoiding these restrictions, can be obtained using the integral estimate \eqref{eq:mainres2}. In particular, we obtain from Theorem \ref{thm:mainthm3} and Karp's divergence theorem \cite{Karp} the following corollary, for which we denote by $B_r$ a ball of radius $r$:

\begin{cor}\label{cor:meanzero}
Suppose there exist $c>0$, $q>1$ with $\vol(B_r) \leq cr^q$ for $r \geq 1$ and that the Ricci curvature of $M$ is bounded below. Suppose $u \in C^2(M)$ and that $\Delta u$ has an integral (i.e. either $(\Delta u)^+$ or $(\Delta u)^-$ is integrable) with $u,\,\Delta u \in L^p(M)$ for $1/p+1/q=1$. Then $$\int_M \Delta u(x)\,dx = 0.$$
\end{cor}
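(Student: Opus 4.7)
The plan is to combine the integral gradient estimate of Theorem \ref{thm:mainthm3} with Karp's divergence theorem, applied to the vector field $X=\nabla u$. The first step will be to show that the gradient lies in $L^p$. Since we have $u,\Delta u\in L^p(M)$ and the Ricci curvature is bounded below, Theorem \ref{thm:mainthm3} applied with any convenient choice of $\delta>0$ gives $\|du\|_{L^p(M)}<\infty$. Equivalently, $|X|\in L^p(M)$, and of course $\divv X=\Delta u$, which by hypothesis has an integral.

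Next I would verify the tail condition needed to invoke Karp's theorem, namely that
\begin{equation*}
\liminf_{r\to\infty}\frac{1}{r}\int_{B_{2r}\setminus B_r}|X|\,d\vol=0.
\end{equation*}
This is exactly where the polynomial volume growth and the conjugate exponent condition $1/p+1/q=1$ enter. By H\"older's inequality,
\begin{equation*}
\int_{B_{2r}\setminus B_r}|X|\,d\vol\leq \vol(B_{2r})^{1/q}\,\|X\|_{L^p(B_{2r}\setminus B_r)}\leq (2^{q}c)^{1/q}\,r\,\|X\|_{L^p(B_{2r}\setminus B_r)},
\end{equation*}
for $r\geq 1$, so that
\begin{equation*}
\frac{1}{r}\int_{B_{2r}\setminus B_r}|X|\,d\vol\leq (2^{q}c)^{1/q}\,\|X\|_{L^p(B_{2r}\setminus B_r)}.
\end{equation*}
Since $|X|\in L^p(M)$, the right-hand side tends to $0$ as $r\to\infty$, confirming the tail condition.

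Karp's divergence theorem then applies to $X=\nabla u$ and gives $\int_M \Delta u(x)\,dx=\int_M \divv X(x)\,dx=0$, which is the desired conclusion. The main obstacle here is really just the bookkeeping to match the hypothesis of Karp's theorem; the substantive input is Theorem \ref{thm:mainthm3}, which promotes the $L^p$ control on $u$ and $\Delta u$ to $L^p$ control on $du$, and this is precisely what allows the growth assumption $\vol(B_r)\leq cr^q$ to be paired with $1/p+1/q=1$ via H\"older.
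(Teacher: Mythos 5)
Your argument is correct and follows exactly the route the paper indicates: Theorem \ref{thm:mainthm3} gives $du\in L^p(M)$, and then Karp's divergence theorem applied to $X=\nabla u$, with the tail condition $\liminf_{r\to\infty}\frac1r\int_{B_{2r}\setminus B_r}|X|\,d\vol=0$ verified via H\"older together with the volume growth $\vol(B_r)\leq cr^q$ and the conjugate exponent relation $1/p+1/q=1$, yields $\int_M\Delta u=0$. The paper states the corollary without spelling out these steps, and your write-up supplies precisely the intended bookkeeping.
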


In particular, under the assumptions of Corollary \ref{cor:meanzero} it follows that if $f\in C^2(M)$ is a harmonic function with $f \in L^{2p}(M)$ for $1/p+1/q=1$ then $f$ is constant (apply Corollary \ref{cor:meanzero} to $f^2$ and use \eqref{eq:mainres2} to verify $\Delta f^2 = 2|d f|^2 \in L^p(M)$).

\subsection{}
Before proving the main results, let us start with a simple, suggestive calculation. Fix $r>0$, suppose $u \in C^2(M)$, $x \in M$, $v \in T_xM$ with $|v|=1$, set $\gamma_v(s) = \exp (sv)$ for $s \in [0,r]$ and consider $w(s):=u(\gamma_v(s))$. Then, by Taylor's theorem, we have
\begin{equation}
w(s) = w(0) +sw'(0) + \int_0^s (s-t)w''(t)dt.
\end{equation}
Calculating $w'(0)$ and $w''(t)$ for the geodesic $\gamma_v$ yields
\begin{equation}
s(du)_x(v) = u(\gamma_v(s)) - u(x) - \int_0^s (s-t)(\Hess u)_{\gamma_v(t)}(\dot{\gamma}_v(t),\dot{\gamma}_v(t))dt
\end{equation}
which implies
\begin{equation}
|(du)_x(v)| \leq \frac2{s} \sup_{\gamma_v[0,s]} |u| + \frac{s}2\sup_{\gamma_v[0,s]} |\Hess u|
\end{equation}
for all $s \in (0,r]$. Consequently, if $D_0 \subset D$ are regular domains (open and relatively compact with non-empty smooth boundary) with $r_0 = \rho(D_0,\partial D)$, then
\begin{equation}
\sup_{D_0}|du| \leq \frac2{r_0} \sup_{D} |u| + \frac{r_0}2\sup_{D} |\Hess u|.
\end{equation}
Replacing the Hessian with the Laplacian requires, according to Bochner's formula, a lower bound on the Ricci curvature of $D$. To obtain precisely the estimate \eqref{eq:mainres}, however, the strategy will be to use Ito's formula instead of Taylor's theorem and replace the geodesic path with a Brownian motion or, more generally, an $L$-diffusion.

\section{Main results}\label{sec:functions}

\subsection{}
Suppose $D$ is a regular domain with $X(x)$ an $L$-diffusion starting at $x \in D$ with stochastic parallel transport $//$ and whose anti-development to $T_xM$ has martingale part $B$. Denote by $\tau$ the first exit time of $X(x)$ from $D$. Then, by It\^{o}'s formula, we have for $u \in C^2(M)$ that
\begin{equation}
u(X_{t\wedge \tau}(x)) = u(x) + \int_0^{t\wedge \tau} (du)_{X_s(x)}(//_s dB_s) + \int_0^{t \wedge \tau} Lu(X_s(x))\,ds
\end{equation}
and therefore
\begin{equation}\label{eq:expito}
\E\left[ u(X_{t\wedge \tau}(x))\right] = u(x) + \int_0^t \E \left[ 1_{\lbrace s < \tau \rbrace} (Lu)(X_s(x))\right] ds.
\end{equation}
Note that this equation involves two slightly different semigroups. First there is
\begin{equation}
P^1_tu(x) := \E\left[ u(X_{t\wedge \tau}(x))\right]
\end{equation}
which solves the diffusion equation $(\partial_t - L)u_t = 0$ on $D$ with initial condition $u_0 = u$ and boundary condition $u_t \vert_{\partial D} = u\vert_{\partial D}$. Second there is
\begin{equation}
P^2_tu(x) := \E\left[ 1_{\lbrace t < \tau \rbrace} u(X_t(x))\right]
\end{equation}
which also solves the diffusion equation on $D$ with initial condition $u_0 = u$, but with the Dirichlet boundary condition $u_t \vert_{\partial D} = 0$. These facts are easily verified by It\^{o}'s formula. Equation \eqref{eq:expito} can therefore be rearranged as
\begin{equation}\label{eq:expito2}
u(x) = P^1_tu(x) - \int_0^t P^2_s(Lu)(x) \,ds.
\end{equation}
Our main result, inequality \eqref{eq:mainres}, is obtained from this by differentiating both sides and applying the Bismut formula, for the semigroups $P^1$ and $P^2$. The Bismut formula was introduced on compact manifolds by Bismut in \cite{bismut} and extended by Elworthy and Li in \cite{ElworthyLi}. The version that we use allows localization and was introduced by the second author in \cite{Thalmaier97}. The details of all this will now be explained.

\subsection{}
Denote by $\Ric^Z = \Ric -2\nabla Z$ the Bakry-Emery tensor and by $\mathscr{Q}$ the $\End(T_xM)$-valued solution to the ordinary differential equation
\begin{equation}
\frac{d}{dt} \mathscr{Q}_t  = -\frac12\Ric^Z_{//_t}\mathscr{Q}_t
\end{equation}
with $\mathscr{Q}_0 = \id_{T_xM}$ and $\Ric^Z_{//_t} := //_t^{-1} \Ric^Z //_t$. Fix $t>0$ and suppose $h$ is a bounded adapted process with paths in the Cameron-Martin space $L^{1,2}([0,t];\R)$ such that $h(s) = 0$ for $s \geq t \wedge \tau$. If $u_s$ is a solution to the diffusion equation on $D$ then, by It\^{o}'s formula and the Weitzenb\"{o}ck formula, it follows that
\begin{equation}
du_{t-s}(//_s \mathscr{Q}_s h(s) ) - u_{t-s}(X_s(x))\int_0^s \langle \mathscr{Q}_r \dot{h}(r),dB_r\rangle
\end{equation}
is a local martingale. If in addition $h(0) = 1$ with $\int_0^{t \wedge \tau} |\dot{h}(s)|^2 ds$ integrable, then evaluating at times $0$ and $s = t\wedge \tau$, taking expectations and using the initial and boundary conditions we obtain
\begin{align}
(dP^1_tu)_x &= -\E\left[ u(X_{t\wedge \tau}(x)) \int_0^{t\wedge \tau} \langle \mathscr{Q}_r \dot{h}(r),dB_r\rangle\right],\label{eq:bisone} \\ 
(dP^2_t u)_x &= -\E\left[ 1_{\lbrace t < \tau \rbrace} u(X_t(x)) \int_0^{t} \langle \mathscr{Q}_r \dot{h}(r),dB_r\rangle\right]. \label{eq:bistwo}
\end{align}
These Bismut formulae express the derivatives of the semigroups in a way which does not involve the derivatives of $u$. Following \cite{ThalmaierWang2011}, an explicit choice of $h$ allows to obtain explicit estimates:

\begin{lemma}\label{lem:est}
Suppose $\phi \in C^2(\ovl{D})$ with $\phi >0$ and $\phi \leq 1$ on $D$, $\phi(x)=1$, $\phi\vert_{\partial D} = 0$ and set
\begin{equation}
c(\phi) := \sup_D \Big\lbrace 3|\nabla \phi|^2 -2 \phi L \phi \Big\rbrace.
\end{equation}
Then there exists a bounded adapted process $h$ with paths in the Cameron-Martin space $L^{1,2}([0,t];\R)$ such that $h(0) = 1$, $h(s) = 0$ for $s \geq t \wedge \tau$ and
\begin{equation}
\E \left[\int_0^{t \wedge \tau} \dot{h}^2(s) ds\right] \leq  \frac{c(\phi)}{1-\e^{-c(\phi)t}}.
\end{equation}
\end{lemma}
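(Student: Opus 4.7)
Set $c := c(\phi)$ and $T := t\wedge\tau$. The plan is to reduce the problem to a deterministic optimum by means of an adapted time-change, with a supermartingale property of a carefully chosen functional of $X$ providing the key expectation estimate.

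The first step is the It\^o calculation. From $\nabla\phi^{-2} = -2\phi^{-3}\nabla\phi$ and $\Delta\phi^{-2} = -2\phi^{-3}\Delta\phi + 6\phi^{-4}|\nabla\phi|^2$, one obtains
\begin{equation*}
L\phi^{-2} = \phi^{-4}\bigl(3|\nabla\phi|^2 - 2\phi L\phi\bigr) \le c\,\phi^{-4} \text{ on } D,
\end{equation*}
where the inequality is the very definition of $c(\phi)$. Applying It\^o's formula to $\phi^{-2}(X_s)\exp(-c\int_0^s\phi^{-2}(X_r)\,dr)$ produces a non-positive drift, so this process is a local supermartingale on $[0,\tau)$ starting from $1$. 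In particular, the It\^o formula also shows that $\int_0^\tau\phi^{-2}(X_r)\,dr = +\infty$ almost surely, a fact we will use to force pathwise terminal conditions below.

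The second step is the construction of $h$. The natural deterministic model is
\begin{equation*}
g(u) := \frac{\e^{-cu}-\e^{-ct}}{1-\e^{-ct}}, \qquad u\in[0,t],
\end{equation*}
which satisfies $g(0)=1$, $g(t)=0$ and, by a short direct calculation,
\begin{equation*}
\int_0^t\dot g(u)^2\,du = \frac{c(1+\e^{-ct})}{2(1-\e^{-ct})} \le \frac{c}{1-\e^{-ct}}.
\end{equation*}
To also force vanishing at $\tau < t$, I would set $h(s) := g(\theta(s))$ for an adapted, nondecreasing, absolutely continuous time-change $\theta\colon[0,T]\to[0,t]$ with $\theta(0)=0$ and $\theta(T)=t$ almost surely. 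A concrete candidate takes $\dot\theta(s)$ to be a suitable multiple of $\phi^{-2}(X_s)$ with an adapted self-normalizing factor chosen so that the additive functional $\int_0^\cdot\dot\theta\,ds$ hits $t$ exactly at time $T$; the blow-up of $\int^\tau\phi^{-2}$ at the exit time, together with a deterministic piece diverging at $t$, is what makes such a normalization feasible.

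The estimate then follows from the change of variable $u = \theta(s)$:
\begin{equation*}
\E\int_0^T\dot h(s)^2\,ds = \E\int_0^t\dot g(u)^2\,\dot\theta\bigl(\theta^{-1}(u)\bigr)\,du,
\end{equation*}
and the expectation of $\dot\theta(\theta^{-1}(u))$ is at most $1$ for almost every $u\in(0,t)$ by the supermartingale estimate of the first step combined with Fubini, yielding $\E\int_0^T\dot h^2\,ds \le \int_0^t\dot g^2\,du \le c/(1-\e^{-ct})$. The main obstacle is the adapted construction of $\theta$ satisfying the pointwise terminal condition $\theta(T)=t$ without spoiling the sharp constant: it is precisely here that the definition $c(\phi) = \sup_D\{3|\nabla\phi|^2 - 2\phi L\phi\}$ (rather than any other curvature-like combination) is forced on us by the It\^o calculation in the first step.
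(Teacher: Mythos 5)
Your overall strategy is essentially the paper's: write $h = g\circ\theta$ with $\theta$ driven by $\phi^{-2}(X_s)$, verify $L\phi^{-2} = \phi^{-4}(3|\nabla\phi|^2-2\phi L\phi)$, and reduce the stochastic estimate to a deterministic one for $g$. Indeed your $g(u) = (\e^{-cu}-\e^{-ct})/(1-\e^{-ct})$ is precisely the paper's choice of $h_1$. However, there is a genuine error in the key expectation bound.

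You claim that $\E[\dot\theta(\theta^{-1}(u))] \le 1$ follows from the supermartingale estimate. It does not. Your supermartingale is $M_s = \phi^{-2}(X_s)\exp(-c\int_0^s\phi^{-2}(X_r)\,dr)$ with $M_0 = 1$, and when you stop at $\theta^{-1}(u) = \sigma(u)$ the clock $\int_0^{\sigma(u)}\phi^{-2}(X_r)\,dr$ equals $u$ \emph{by construction of the time change}, so $M_{\sigma(u)} = \phi^{-2}(X_{\sigma(u)})\e^{-cu}$ and the supermartingale property gives
\begin{equation}
\E\bigl[\dot\theta(\theta^{-1}(u))\bigr] = \E\bigl[\phi^{-2}(X_{\sigma(u)})\bigr] \le \e^{cu},
\end{equation}
not $\le 1$. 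The paper records exactly this bound as $\E[\phi^{-2}(X'_s(x))] \le \e^{c(\phi)s}$, where $X'$ is the $\phi^2L$-diffusion. Your chain of inequalities then becomes $\E\int_0^T\dot h^2 \le \int_0^t \dot g^2(u)\,\e^{cu}\,du$, and a direct computation shows $\int_0^t\dot g^2(u)\,\e^{cu}\,du = c/(1-\e^{-ct})$, recovering the lemma's bound exactly. The quantity you actually claim, $\int_0^t\dot g^2 = c(1+\e^{-ct})/(2(1-\e^{-ct}))$, is a strictly smaller constant that the supermartingale argument does not deliver; the fact that your alleged bound is better than the lemma's should have been a warning sign.

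A secondary remark: the concern you flag about an ``adapted self-normalizing factor'' forcing $\theta(T)=t$ is a non-issue. One does not need $\theta(T)=t$; the paper simply takes $\dot\theta(s) = \phi^{-2}(X_s)$ with no normalization, observes that $\theta$ reaches $t$ at a stopping time $\sigma(t) \le t\wedge\tau$ (since $\phi\le 1$ forces $\dot\theta\ge 1$, and the clock diverges before $\tau$ by \cite[Prop.~2.3]{TW}), and lets $h$ be frozen at zero on $[\sigma(t), t\wedge\tau]$. That already meets all the requirements of the lemma.
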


\begin{proof}
Consider the time change
\begin{equation}
\sigma(s) = \inf \Bigg\lbrace r \geq 0 : \int_0^r \phi^{-2}(X_u(x))du \geq s \Bigg\rbrace
\end{equation}
and let
\begin{equation}
h_0(s) = \int_0^s \phi^{-2}(X_r(x))1_{\lbrace r < \sigma(t) \rbrace}\,dr.
\end{equation}
Finally let $h(s) = (h_1 \circ h_0)(s)$ where $h_1 \in C^1([0,t])$ is chosen so that $h_1(0) = 1$, $h_1(t) = 0$ and $\dot{h}_1 \leq 0$. Then, as in \cite[Remark~3.2]{ThalmaierWang2011}, it follows that $h(0)=1$, $h(s) = 0$ for $s\geq \sigma(t)$ with $\sigma(t) \leq \tau \wedge t$ and
\begin{equation}
\E \left[ \int_0^{t \wedge \tau} \dot{h}^2(s) ds \right] = \E \left[ \int_0^{\sigma(t)} (\dot{h}_1  \circ h_0)^2(s) \phi^{-4}(X_s(x))\,ds \right] = \int_0^t \dot{h}^2_1(s) \E\left[ \phi^{-2}(X'_s(x))\right]ds
\end{equation}
where $X'(x)$ denotes the diffusion starting at $x$ with generator $\phi^2L$ which almost surely does not exit $D$ by \cite[Prop.~2.3]{TW}. To estimate the integrand we use
\begin{equation}
\phi^2 L \phi^{-2} = \left(3|\nabla \phi|^2 -2 \phi L \phi\right)\phi^{-2}
\end{equation}
to obtain, via It\^{o}'s formula and Gronwall's lemma, that
\begin{equation}
\E\left[ \phi^{-2}(X'_s(x))\right] \leq \phi^{-2}(x) \,\e^{c(\phi) s}.
\end{equation}
Using $\phi(x)=1$ and taking
\begin{equation}
h_1(s) = 1-\frac{c(\phi)}{1-\e^{-c(\phi)t}}\int_0^s \e^{-c(\phi)r} dr
\end{equation}
we obtain the desired estimate.
\end{proof}

Now set $K_0:= \inf \left\lbrace \Ric(v,v):v\in TD,\ |v|=1\right\rbrace$.

\begin{lemma}\label{lem:lemball}
Suppose $D$ is a ball of radius $r$ centred at $x$. Then there exists $\phi$ satisfying the conditions of Lemma \ref{lem:est} with
\begin{equation}
c(\phi) \leq \frac{\pi}{2 r} \left( 2 \sup_D |Z|+\sqrt{(n-1)K^-_0}\right) + \frac{\pi^2 (n+3)}{4r^2}.
\end{equation}
\end{lemma}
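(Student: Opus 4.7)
The plan is to take $\phi(y):=\cos\bigl(\pi\rho(x,y)/(2r)\bigr)$ for $y\in\overline{D}$. The normalisations $\phi(x)=1$, $\phi|_{\partial D}=0$, and $0<\phi\le 1$ on $D$ are immediate from $\rho(x,x)=0$ and $\rho|_{\partial D}=r$. Note that $\phi$ is in fact smooth at the centre: since $\cos(\pi\sqrt{t}/(2r))$ is an entire power series in $t$ and $\rho^2(x,\cdot)$ is smooth in a normal neighbourhood of $x$, so is $\phi$. The only potential irregularity is therefore on the cut locus $\mathrm{Cut}(x)\cap D$.

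Away from the cut locus, a direct computation with $\alpha:=\pi\rho/(2r)$ and $|\nabla\rho|=1$ gives
\begin{align*}
|\nabla\phi|^2 &= \left(\frac{\pi}{2r}\right)^{\!2}\sin^2\alpha,\\
-2\phi L\phi &= \left(\frac{\pi}{2r}\right)^{\!2}\cos^2\alpha+\frac{\pi}{2r}\sin\alpha\cos\alpha\bigl(\Delta\rho+2\langle Z,\nabla\rho\rangle\bigr),
\end{align*}
so that
$$3|\nabla\phi|^2-2\phi L\phi=\left(\frac{\pi}{2r}\right)^{\!2}(3\sin^2\alpha+\cos^2\alpha)+\frac{\pi}{2r}\sin\alpha\cos\alpha\bigl(\Delta\rho+2\langle Z,\nabla\rho\rangle\bigr).$$
I would then invoke the Laplacian comparison theorem: since $\Ric\ge-K_0^-$, setting $k:=\sqrt{K_0^-/(n-1)}$ gives $\Delta\rho\le(n-1)k\coth(k\rho)\le\sqrt{(n-1)K_0^-}+(n-1)/\rho$. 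Combining this with the crude bounds $3\sin^2\alpha+\cos^2\alpha\le 4$, $\sin\alpha\cos\alpha\le 1$, $|\langle Z,\nabla\rho\rangle|\le\sup_D|Z|$, and $\sin\alpha/\rho\le\pi/(2r)$ (from $\sin\alpha\le\alpha$), the four resulting pieces add up to
$$\frac{\pi^2}{r^2}+\frac{(n-1)\pi^2}{4r^2}+\frac{\pi}{2r}\sqrt{(n-1)K_0^-}+\frac{\pi}{r}\sup_D|Z|\;=\;\frac{\pi^2(n+3)}{4r^2}+\frac{\pi}{2r}\bigl(2\sup_D|Z|+\sqrt{(n-1)K_0^-}\bigr),$$
which is exactly the stated bound on $c(\phi)$.

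The main obstacle is analytic rather than computational: both the formula for $L\phi$ and the Laplacian comparison used on it require $\rho(x,\cdot)$ to be $C^2$, which typically fails on $\mathrm{Cut}(x)$. The remedy is standard. Since $f(\rho)=\cos(\pi\rho/(2r))$ satisfies $f'\le 0$ on $[0,r]$, at each $y\in D\setminus\{x\}$ one may invoke Calabi's upper barrier for $\rho$ — a smooth function $\tilde\rho_y\ge\rho$ with $\tilde\rho_y(y)=\rho(y)$ and $\Delta\tilde\rho_y(y)\le\sqrt{(n-1)K_0^-}+(n-1)/\rho(y)$ — so that the pointwise inequality for $-2\phi L\phi$ holds on all of $D$ in the barrier/viscosity sense. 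A routine regularisation $\phi_\varepsilon\in C^2(\overline{D})$ sharing the same boundary normalisations and satisfying $c(\phi_\varepsilon)$ bounded by the same right-hand side (up to $o(1)$) then allows one to apply Lemma~\ref{lem:est} to $\phi_\varepsilon$ and pass to the limit $\varepsilon\to 0$; no constants change and the bound is obtained.
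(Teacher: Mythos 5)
Your proof is correct and takes essentially the same route as the paper: the same test function $\phi=\cos\bigl(\pi\rho(x,\cdot)/(2r)\bigr)$, the same gradient bound $|\nabla\phi|\le\pi/(2r)$, the same Laplacian comparison estimate $\Delta\rho\le\sqrt{(n-1)K_0^-}+(n-1)/\rho$, and the same elementary inequalities ($\sin\alpha/\rho\le\pi/(2r)$, etc.), with only a cosmetic difference in how the pieces are grouped before summing to $\frac{\pi^2(n+3)}{4r^2}+\frac{\pi}{2r}\bigl(2\sup_D|Z|+\sqrt{(n-1)K_0^-}\bigr)$. You also flag, correctly, a point the paper passes over in silence — $\rho(x,\cdot)$ is merely Lipschitz across $\mathrm{Cut}(x)$, so $\phi$ need not lie in $C^2(\overline D)$ as Lemma~\ref{lem:est} requires — and your remedy via Calabi's upper barrier (favourable sign since $\phi'<0$) together with a $C^2$ regularization and a limiting argument is the standard and appropriate fix, though the stability of Lemma~\ref{lem:est} under that approximation would still need a sentence of justification.
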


\begin{proof}
Take
\begin{equation}
\phi(p) = \cos \left( \frac{\pi \rho(x,p)}{2 r}\right).
\end{equation}
Clearly this choice of $\phi$ satisfies the conditions of Lemma \ref{lem:est}. Furthermore
\begin{equation}
|\nabla \phi| \leq \frac{\pi}{2 r}
\end{equation}
and by the Laplacian comparison theorem
\begin{equation}
-\Delta \phi \leq \frac{\pi}{2 r}\sqrt{(n-1)K^-_0} + \frac{\pi^2 n}{4r^2}
\end{equation}
which together give the estimate on $c(\phi)$.
\end{proof}

Now set $K_Z:=\inf \lbrace \Ric^Z(v,v):v\in TD,\ |v|=1\rbrace$.

\begin{prop}\label{prop:prelimest}
Suppose $D_0 \subset D$ are regular domains with $u \in C^2(M)$. Set $r_0 = \rho(D_0,\partial D)$. Then
\begin{equation}
|dP^1_tu|(x) \vee |dP^2_tu|(x) \leq \sup_D |u| \frac1{\sqrt{t}} \left(\frac{ct\e^{K_Z^- t}}{1-\e^{-ct}}\right)^{1/2}
\end{equation}
for all $t> 0$ and $x \in D_0$ where
\begin{equation}
c := \frac{\pi}{2r_0} \left( 2 \sup_D |Z|+\sqrt{(n-1)K^-_0}\right) + \frac{\pi^2 (n+3)}{4r^2_0}.
\end{equation}
\end{prop}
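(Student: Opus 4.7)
The plan is to combine the Bismut formulae \eqref{eq:bisone}--\eqref{eq:bistwo} with a localized choice of $h$ produced by Lemmas \ref{lem:est} and \ref{lem:lemball}, using for each $x \in D_0$ the geodesic ball $B := B(x,r_0)$ in place of $D$. Since $r_0 = \rho(D_0,\partial D)$, we have $B \subset D$, and hence the exit time $\tau_B$ of $X(x)$ from $B$ satisfies $\tau_B \leq \tau$.

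First I would apply Lemma \ref{lem:lemball} to $B$ to obtain $\phi \in C^2(\ovl B)$ with $\phi(x)=1$, $\phi|_{\partial B}=0$ and $c(\phi) \leq c$, for $c$ as in the statement. Feeding this $\phi$ into Lemma \ref{lem:est} (applied with $B$ in place of $D$) produces an adapted process $h$ with $h(0)=1$, $h \equiv 0$ on $[t\wedge\tau_B,\infty)$, and $\E[\int_0^{t\wedge\tau_B}\dot h(s)^2\,ds] \leq c(\phi)/(1-\e^{-c(\phi) t})$. Because $\tau_B \leq \tau$, this $h$ vanishes beyond $t\wedge\tau$ as well, so it is admissible in \eqref{eq:bisone} and \eqref{eq:bistwo}.

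Next, Cauchy--Schwarz and It\^{o}'s isometry applied to each Bismut formula bound $|(dP^1_t u)_x|$ and $|(dP^2_t u)_x|$ by $\sup_D|u|$ times $(\E[\int_0^{t\wedge\tau}|\mathscr{Q}_r\dot h(r)|^2\,dr])^{1/2}$. Differentiating $|\mathscr{Q}_r v|^2$ using its defining ODE and the bound $\Ric^Z \geq K_Z$ on $D$ gives, via Gronwall, the pathwise estimate $\|\mathscr{Q}_r\|^2 \leq \e^{-K_Z r} \leq \e^{K_Z^- t}$ for $r \in [0,t]$. Combining these with Lemma \ref{lem:est} (noting $\dot h$ vanishes outside $[0,t\wedge\tau_B]$) yields
\[
|(dP^1_t u)_x| \vee |(dP^2_t u)_x| \leq \sup_D|u|\;\e^{K_Z^- t/2}\left(\frac{c(\phi)}{1-\e^{-c(\phi)t}}\right)^{1/2}.
\]

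To finish, a short calculation shows that $y\mapsto y/(1-\e^{-yt})$ is non-decreasing on $(0,\infty)$ for fixed $t>0$, so $c(\phi)\leq c$ allows replacing $c(\phi)$ by $c$, after which a cosmetic rewrite gives the stated form $t^{-1/2}(ct\,\e^{K_Z^- t}/(1-\e^{-ct}))^{1/2}$. The main technical nuisance is the smoothness of the $\phi$ supplied by Lemma \ref{lem:lemball} when $r_0$ exceeds the injectivity radius at $x$; this is handled in the usual way by a smooth approximation of $\rho(x,\cdot)$ near the cut locus (or by restricting to a slightly smaller geodesic ball), without affecting the final constants.
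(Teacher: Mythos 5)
Your proof is correct and follows essentially the same approach as the paper, which compresses the argument into a single sentence: apply Lemmas \ref{lem:est} and \ref{lem:lemball} to the ball $B(x,r_0)\subset D$, feed the resulting $h$ into the Bismut formulae \eqref{eq:bisone}--\eqref{eq:bistwo} (noting $\tau_B \le \tau$ so $h$ remains admissible), then use Cauchy--Schwarz together with the Gronwall bound $\|\mathscr{Q}_r\|\le \e^{K_Z^- t/2}$ on $[0,t\wedge\tau]$. Your observations that $y\mapsto y/(1-\e^{-yt})$ is increasing (so $c(\phi)\le c$ may be used) and that $\phi$ needs smoothing near the cut locus are both correct and fill small gaps that the paper leaves implicit.
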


\begin{proof}
Formulae \eqref{eq:bisone} and \eqref{eq:bistwo} hold for the ball of radius $r_0$ centred at $x$, so the result follows by Lemmas \ref{lem:est} and \ref{lem:lemball}.
\end{proof}

\begin{thm}\label{thm:mainthm}
Suppose $D_0 \subset D$ are regular domains with $u \in C^2(M)$. Set $r_0 = \rho(D_0,\partial D)$ and $\delta >0$. Then
\begin{equation}\label{eq:mainthmest}
\sup_{D_0}|du| \leq C \left( \frac{2\delta}{r_0} \sup_D |u|  + \frac{r_0}{2\delta} \sup_D |2Lu| \right)
\end{equation}
where
\begin{equation}
C:= \exp\left[\frac{\pi r_0}{16\delta^2} \left( 2 \sup_D |Z|+\sqrt{(n-1)K^-_0}\right) + \frac{\pi^2 (n+3)}{32\delta^2}+\frac{r_0^2 K_Z^-}{8\delta^2}\right].
\end{equation}
\end{thm}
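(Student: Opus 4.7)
The plan is to differentiate the identity \eqref{eq:expito2} in $x$ and bound the two resulting terms by Proposition \ref{prop:prelimest}. Fix $x\in D_0$. Since $r_0=\rho(D_0,\partial D)$, the ball $B(x,r_0)$ is contained in $D$, so I would apply the setup of Section \ref{sec:functions} to the $L$-diffusion killed on exit from $B(x,r_0)$, exactly as in the proof of Proposition \ref{prop:prelimest}. Then \eqref{eq:expito2} reads $u(x)=P^1_tu(x)-\int_0^tP^2_s(Lu)(x)\,ds$, and differentiating at $x$ yields
\begin{equation*}
|du(x)|\leq|dP^1_tu(x)|+\int_0^t|dP^2_s(Lu)(x)|\,ds.
\end{equation*}

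To estimate each summand I would use Proposition \ref{prop:prelimest} together with the elementary inequality $y/(1-\e^{-y})\leq \e^{y}$ for $y>0$, which replaces the factor $\frac{1}{\sqrt s}\bigl(cs\,\e^{K_Z^-s}/(1-\e^{-cs})\bigr)^{1/2}$ by the more usable $\frac{1}{\sqrt s}\,\e^{(c+K_Z^-)s/2}$. Choosing $t=r_0^2/(4\delta^2)$ is then essentially forced by the desired form of \eqref{eq:mainthmest}: it produces $1/\sqrt t=2\delta/r_0$ and $2\sqrt t=r_0/\delta$, precisely the prefactors needed in front of $\sup_D|u|$ and $\sup_D|2Lu|$. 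For the integral term one would additionally pull the exponential out via $\e^{(c+K_Z^-)s/2}\leq \e^{(c+K_Z^-)t/2}$ on $[0,t]$ and evaluate $\int_0^t s^{-1/2}\,ds=2\sqrt t$.

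To finish, I would identify $\e^{(c+K_Z^-)t/2}=\e^{(c+K_Z^-)r_0^2/(8\delta^2)}$ with the constant $C$ of the theorem by substituting the explicit value
\begin{equation*}
c=\frac{\pi}{2r_0}\bigl(2\sup_D|Z|+\sqrt{(n-1)K_0^-}\bigr)+\frac{\pi^2(n+3)}{4r_0^2}
\end{equation*}
from Proposition \ref{prop:prelimest}; this cleanly reproduces the three summands defining $\log C$. Taking the supremum over $x\in D_0$ would then yield \eqref{eq:mainthmest}. The only step that demands a bit of care is this final bookkeeping — keeping the $r_0$-dependence of $c$, the Bakry--\'Emery term $K_Z^-$, and the dimensional constant all consistent — but it is a direct substitution rather than a genuine obstacle. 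The real conceptual work, the localized Bismut-formula estimate of Proposition \ref{prop:prelimest} and the semigroup representation \eqref{eq:expito2}, is already done.
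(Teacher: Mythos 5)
Your proposal is correct and follows the paper's proof essentially verbatim: differentiate \eqref{eq:expito2}, bound each term via Proposition~\ref{prop:prelimest}, absorb the $y/(1-\e^{-y})$ factor into an exponential, evaluate $\int_0^t s^{-1/2}\,ds = 2\sqrt t$, and set $t = r_0^2/(4\delta^2)$. The bookkeeping with $c$ and $K_Z^-$ that you identify as the only delicate step is also exactly how the paper's constant $C$ arises.
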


\begin{proof}
Differentiating both sides of equation \eqref{eq:expito2} gives
\begin{equation}
du_x  = dP^1_tu_x - \int_0^t dP^2_s(Lu)_x \,ds
\end{equation}
and therefore
\begin{equation}
|du|(x) \leq \left|dP^1_tu\right|(x) + \int_0^t \left|dP^2_s(Lu)\right|(x) \,ds.
\end{equation}
Applying Proposition \ref{prop:prelimest}, it follows that
\begin{equation}
\begin{split}
|du|(x) &\leq \sup_D |u| \frac1{\sqrt{t}} \left(\frac{ct\e^{K_Z^- t}}{1-\e^{-ct}}\right)^{1/2} + \sup_D |Lu|\int_0^t \frac1{\sqrt{s}} \left(\frac{cs\e^{K_Z^- s}}{1-\e^{-cs}}\right)^{1/2} ds \\
&\leq \e^{(c+K_Z^-) \,t/2}\left( \frac1{\sqrt{t}} \sup_D |u|  + \sqrt{t} \sup_D |2Lu| \right).
\end{split}
\end{equation}
The result follows by setting $t = r_0^2/4\delta^2$.
\end{proof}

Note that setting $\delta^2 = (1\vee r_0)^2(1\vee K^-)$ in Theorem \ref{thm:mainthm} with $Z=0$ implies
\begin{equation}
\sup_{D_0}|du| \leq C(n) \sqrt{1\vee K^-} \left( \frac{1\vee r_0}{r_0}\right) \left( \sup_D |u|  + \sup_D |\Delta u| \right).
\end{equation}
We therefore recover the behaviour, in $r_0$ and the curvature bound, of the constant that was previously obtained by G\"{u}neysu and Pigola in \cite{GP2017}. We also therefore recover a stronger version of the zero mean value condition \cite[Corollary~1.3]{GP2017} in which the supremum norm of sectional curvature can be replaced by only a lower bound on Ricci curvature. Note that \cite[Corollary~1.3]{GP2017} requires sublinear volume growth at infinity and can not, therefore, be applied even to the case $M=\mathbb{R}^n$. In Section \ref{sec:intestimates} we will investigate circumstances under which this volume growth condition can be relaxed.

\subsection{}
As a further corollary to Theorem \ref{thm:mainthm}, we obtain quantitative estimates on the gradient of eigenfunctions:

\begin{cor}\label{cor:mainthm2}
Suppose $D_0 \subset D$ are regular domains with $u \in C^2(M)$ satisfying $2Lu = -\lambda u$ for some $\lambda >0$. Then
\begin{equation}
\sup_{D_0} |du| \leq \sqrt{\lambda}\, C^{1/\lambda} \left(\frac{2\delta}{r_0}+\frac{r_0}{2\delta}\right)\sup_D |u|
\end{equation}
where the constant $C$ is given as in Theorem \ref{thm:mainthm}.
\end{cor}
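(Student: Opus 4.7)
The plan is to derive this corollary as a direct consequence of Theorem~\ref{thm:mainthm} by choosing the free parameter $\delta$ to optimally incorporate the eigenvalue. The eigenfunction equation $2Lu = -\lambda u$ immediately gives $\sup_D |2Lu| = \lambda \sup_D |u|$, so plugging this into \eqref{eq:mainthmest} yields
\begin{equation}
\sup_{D_0}|du| \leq C \left( \frac{2\delta}{r_0} + \frac{\lambda r_0}{2\delta} \right) \sup_D |u|.
\end{equation}
The factor in brackets carries a $\lambda$ rather than the $\sqrt{\lambda}$ we want, so we must rescale $\delta$ to absorb it.

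The key observation is that Theorem~\ref{thm:mainthm} is valid for every $\delta>0$, so we are free to replace $\delta$ by $\delta\sqrt{\lambda}$. Under this substitution,
\begin{equation}
\frac{2\delta\sqrt{\lambda}}{r_0} + \frac{\lambda r_0}{2\delta\sqrt{\lambda}} = \sqrt{\lambda}\left(\frac{2\delta}{r_0} + \frac{r_0}{2\delta}\right),
\end{equation}
which produces exactly the bracket appearing in the statement of the corollary. Meanwhile, every occurrence of $\delta^2$ in the exponent defining $C$ in Theorem~\ref{thm:mainthm} gets replaced by $\lambda\delta^2$, so each of the three terms in that exponent is multiplied by $1/\lambda$. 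Consequently the constant in the rescaled estimate is precisely $C^{1/\lambda}$, where $C$ is the constant from Theorem~\ref{thm:mainthm} associated to the original parameter $\delta$.

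Combining these two observations yields the claimed inequality. There is no substantive obstacle: the argument is essentially an algebraic rescaling of the free parameter in Theorem~\ref{thm:mainthm}, exploiting the fact that $\sup_D|2Lu|$ is proportional to $\sup_D|u|$ for an eigenfunction. The only point requiring a moment of care is to verify that the scaling $\delta\mapsto\delta\sqrt{\lambda}$ indeed factors uniformly out of all three exponential terms defining $C$, which it does because each of them depends on $\delta$ only through $\delta^{-2}$.
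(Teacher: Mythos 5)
Your proof is correct and is precisely the paper's own argument: the paper's proof reads simply ``Replace $\delta$ with $\delta\sqrt{\lambda}$ in Theorem \ref{thm:mainthm} and use $2Lu=-\lambda u$,'' and you have spelled out exactly that substitution, correctly checking that each term in the exponent of $C$ scales by $1/\lambda$ because it depends on $\delta$ only through $\delta^{-2}$.
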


\begin{proof}
Replace $\delta$ with $\delta\sqrt{\lambda}$ in Theorem \ref{thm:mainthm} and use $2Lu=-\lambda u$.
\end{proof}

If the Ricci curvature of $M$ is bounded below then Brownian motion is non-explosive. There is the following probabilistic formula for the gradient of an eigenfunction of the Laplacian, which is a consequence of Bismut's formula, but the proof of which we include for completeness:

\begin{prop}\label{prop:nonlocformone}
Suppose $u \in C^2(M)$ with $u$ bounded and satisfying $\Delta u = -\lambda u$ for some $\lambda >0$. Suppose $\Ric$ is bounded below. For $x\in M$ suppose $X(x)$ is a Brownian motion on $M$ starting at $x$. Then for $v \in T_{x}M$ we have
\begin{equation}
(du)(v)  = \frac{\lambda \e}2\mathbb{E}\left[ u(X_{2/\lambda}(x))\int_0^{2/\lambda} \langle \mathscr{Q}_s v,dB_s\rangle\right].
\end{equation}
\end{prop}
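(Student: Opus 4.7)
My plan is to combine two ingredients: the explicit action of the heat semigroup on the eigenfunction $u$, and the (non-localized) Bismut formula applied with a deterministic linear choice of $h$. First I would identify how $P_t u$ acts on $u$. Since $\Ric$ is bounded below, Brownian motion on $M$ is non-explosive and bounded classical solutions of the heat equation $(\partial_t - L)v_t = 0$ with prescribed initial data are unique. The identity $Lu = \frac{1}{2}\Delta u = -\frac{\lambda}{2}u$ shows that $v_t := \e^{-\lambda t/2} u$ is such a solution with $v_0 = u$, so
\begin{equation*}
P_t u = \e^{-\lambda t/2}\,u, \qquad \text{hence} \qquad dP_t u = \e^{-\lambda t/2}\,du.
\end{equation*}

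Second, I would apply the Bismut formula \eqref{eq:bisone} in its non-localized form (i.e.\ with $\tau \equiv \infty$), evaluated in direction $v \in T_xM$. The natural choice is the deterministic Cameron--Martin path $h(s) = 1 - \frac{\lambda s}{2}$ on the interval $[0, T]$ with $T := 2/\lambda$, which satisfies $h(0) = 1$, $h(T) = 0$ and $\dot{h} \equiv -\lambda/2$. Substituting into \eqref{eq:bisone} gives
\begin{equation*}
(dP_T u)_x(v) = -\E\left[u(X_T(x)) \int_0^T \langle \mathscr{Q}_s \dot{h}(s)\,v, dB_s\rangle\right] = \frac{\lambda}{2}\,\E\left[u(X_T(x)) \int_0^T \langle \mathscr{Q}_s v, dB_s\rangle\right].
\end{equation*}
Combining with the first step, $(dP_T u)_x(v) = \e^{-\lambda T/2}(du)_x(v) = \e^{-1}(du)_x(v)$, and multiplying through by $\e$ yields the stated formula.

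The only genuinely technical point is justifying the non-localized Bismut formula, since \eqref{eq:bisone} as stated is proved on a regular domain $D$. I would handle this by exhausting $M$ with balls $B_R$ centred at $x$, applying \eqref{eq:bisone} on each $B_R$ with the same deterministic $h$, and then letting $R \to \infty$. The exit times $\tau_R$ tend to $\infty$ almost surely by non-explosion, and the lower bound $\Ric \geq -K$ gives the deterministic estimate $|\mathscr{Q}_s| \leq \e^{Ks/2}$. Together with the boundedness of $u$ and of $\dot h$, dominated convergence handles the passage to the limit inside the expectation on the right-hand side of \eqref{eq:bisone}; the left-hand sides $(dP^1_T u)_x$ then converge necessarily to $(dP_T u)_x = \e^{-1}(du)_x$, closing the argument.
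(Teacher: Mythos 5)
Your reduction — observe $P_t u = \e^{-\lambda t/2}u$ by uniqueness of bounded solutions of the heat equation (valid since Ricci bounded below implies non-explosion), then invoke a global Bismut formula with the deterministic $h(s) = 1-\lambda s/2$ — is conceptually what the paper does, and the arithmetic is fine. However, your justification of the global formula by exhaustion has a genuine gap. Formula \eqref{eq:bisone} is derived under the hypothesis that $h(s)=0$ for $s\geq t\wedge\tau$, and your deterministic $h$ does not vanish at the exit time $\tau_R$ from $B_R$ when $\tau_R<T$. Plugging it into \eqref{eq:bisone} on $B_R$ is therefore not a valid application of that formula: evaluating the underlying local martingale at $s=T\wedge\tau_R$ leaves an uncontrolled boundary term involving $1_{\lbrace\tau_R<T\rbrace}\,(dP^1_{T-\tau_R}u)\bigl(//_{\tau_R}\mathscr{Q}_{\tau_R}h(\tau_R)v\bigr)$, and you have no control on $dP^1_s u$ near $\partial B_R$ that would force it to vanish as $R\to\infty$. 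The dominated-convergence step you describe is then applied to an identity that does not actually hold at finite $R$.

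The ingredient you are missing, and which the paper supplies, is the a priori global boundedness of $du$: this comes from Corollary~\ref{cor:mainthm2} applied on balls of uniformly positive radius, the global lower Ricci bound making the constant uniform. Once $|du|_\infty<\infty$ is established, one need not localize at all. The process
\begin{equation}
\e^{\lambda s/2}\Bigl(\frac{t-s}{t}\Bigr)(du)(\mathscr{Q}_s v)+\e^{\lambda s/2}\,\frac{u(X_s(x))}{t}\int_0^s\langle\mathscr{Q}_r v,dB_r\rangle
\end{equation}
is then $L^2$-bounded on $[0,t]$ (the first term is bounded using $|du|_\infty$ and $|\mathscr{Q}_s|\leq\e^{K^- s/2}$, while the second is $L^2$-bounded using $|u|_\infty$ and the same bound on $\mathscr{Q}$), hence a true martingale rather than merely a local one; taking expectations at $s=0$ and $s=t=2/\lambda$ gives the formula directly. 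If you insist on the exhaustion route, you would still need to establish boundedness of $du$ first and to work with a process $h$ that genuinely satisfies the localization constraint, then track the convergence of the resulting nondeterministic It\^o integrals — strictly more work than the direct martingale argument.
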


\begin{proof}
Since $\Ric$ is bounded below, it follows from Corollary \ref{cor:mainthm2} that there exists a positive constant $C(\lambda)$ such that $|du|_\infty \leq C(\lambda) |u|_{\infty}.$ Indeed, if $M$ is compact then the injectivity radius $\inj(M)$ is positive and we can choose $D_0= B_{\inj(M)/4}(x)$ and $D= B_{\inj(M)/2}(x)$, in which case $r_0 =\inj(M)/4$. Conversely, if $M$ is non-compact then for each $x_0 \in M$ there exist $D_0,D$ with $x_0 \in D_0 \subset D$ and $r_0 = 1$. Either way, $du$ is bounded. By the Weitzenb\"{o}ck formula and integration by parts we have that
\begin{equation}
\e^{{\lambda s}/2}\,\left(\frac{t-s}{t}\right)(du)(\mathscr{Q}_sv) +\e^{\lambda s/2}\,\frac{u(X_s(x))}{t}\int_0^s \langle \mathscr{Q}_r v,dB_r\rangle
\end{equation}
is a local martingale. Since $du$ is bounded, it is actually a true martingale on $[0,t]$. Taking expectations at times $0$ and $t$ yields
\begin{equation}
(du)(v)  = \frac1{t}\mathbb{E}\left[ \e^{{\lambda t}/2} u(X_t(x))\int_0^{t} \langle \mathscr{Q}_s v,dB_s\rangle\right]
\end{equation}
from which the desired formula is obtained by setting $t = 2/\lambda$.
\end{proof}

\begin{cor}
Suppose $\Ric\geq K$ and $\Delta u = -\lambda u$ for some $\lambda >0$. Then
\begin{equation}
|du|_\infty  \leq |u|_\infty \, \e\,\sqrt{\frac{\lambda}2} \left( \frac{1-\e^{- 2K /\lambda}}{2K/\lambda}\right)^{1/2}.
\end{equation}
\end{cor}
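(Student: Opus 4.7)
The plan is to estimate the right-hand side of the Bismut-type identity supplied by Proposition~\ref{prop:nonlocformone}, using Cauchy--Schwarz, It\^o's isometry, and the curvature bound on $\mathscr{Q}$. Specifically, since $Z=0$ here, we have $\Ric^Z=\Ric\geq K$, so $\mathscr{Q}_t$ satisfies $\frac{d}{dt}\mathscr{Q}_t=-\tfrac12\Ric_{//_t}\mathscr{Q}_t$, and differentiating $|\mathscr{Q}_tv|^2$ yields $\frac{d}{dt}|\mathscr{Q}_tv|^2\leq -K|\mathscr{Q}_tv|^2$. Gronwall then gives the pointwise bound $|\mathscr{Q}_tv|\leq \e^{-Kt/2}|v|$ (valid for either sign of $K$).

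Fix $x\in M$ and $v\in T_xM$ with $|v|=1$. Applying the formula of Proposition~\ref{prop:nonlocformone} together with Cauchy--Schwarz,
\begin{equation}
|(du)(v)|\leq \frac{\lambda\e}{2}\,|u|_\infty\,\left(\E\left[\left(\int_0^{2/\lambda}\langle\mathscr{Q}_sv,dB_s\rangle\right)^{\!2}\right]\right)^{\!1/2}.
\end{equation}
By It\^o's isometry and the $\mathscr{Q}$-bound above,
\begin{equation}
\E\left[\left(\int_0^{2/\lambda}\langle\mathscr{Q}_sv,dB_s\rangle\right)^{\!2}\right]=\int_0^{2/\lambda}\E\bigl[|\mathscr{Q}_sv|^2\bigr]\,ds\leq \int_0^{2/\lambda}\e^{-Ks}\,ds=\frac{1-\e^{-2K/\lambda}}{K}.
\end{equation}
Combining the two displays and rearranging the constants,
\begin{equation}
|(du)(v)|\leq \frac{\lambda\e}{2}\,|u|_\infty\,\sqrt{\frac{1-\e^{-2K/\lambda}}{K}}=|u|_\infty\,\e\,\sqrt{\frac{\lambda}{2}}\left(\frac{1-\e^{-2K/\lambda}}{2K/\lambda}\right)^{\!1/2},
\end{equation}
which, taking the supremum over $x$ and over unit $v$, yields the stated inequality.

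There is no genuine obstacle: the only small point of care is that the formula $\int_0^{2/\lambda}\e^{-Ks}\,ds=(1-\e^{-2K/\lambda})/K$ and the $\mathscr{Q}$-bound $|\mathscr{Q}_sv|\leq \e^{-Ks/2}|v|$ must be read with the understanding that $K$ may be negative, in which case both the numerator and denominator on the right-hand side change sign simultaneously, so the ratio remains nonnegative and the bound is meaningful throughout. Boundedness of $u$ (hence integrability of the stochastic integral term in the Bismut formula) is what makes the application of Cauchy--Schwarz legitimate.
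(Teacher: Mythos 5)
Your proof is correct and is the natural way to derive the estimate; the paper states this corollary without proof, but your argument (Cauchy--Schwarz on the Bismut formula from Proposition~\ref{prop:nonlocformone}, It\^o isometry, and the Gronwall bound $|\mathscr{Q}_s v|\leq \e^{-Ks/2}|v|$ coming from $\Ric\geq K$ with $Z=0$) is exactly what the authors have in mind. The final algebraic identification $\frac{\lambda\e}{2}\sqrt{(1-\e^{-2K/\lambda})/K} = \e\sqrt{\lambda/2}\,\bigl((1-\e^{-2K/\lambda})/(2K/\lambda)\bigr)^{1/2}$ checks out, and your remark that both numerator and denominator change sign together when $K<0$ is the right thing to observe to see the bound is well-posed for all $K\in\R$.
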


\section{Integral Estimates}\label{sec:intestimates}

\subsection{} In this section we obtain global integral versions of the above estimates, for the case $Z=0$. We will use them to derive Corollary \ref{cor:meanzero}. The following lemma is well-known:

\begin{lemma}\label{lem:intlem}
Suppose $u\in C^2(M)$ with $u\in L^p(M)$ for some $p>1$. Suppose $\Ric \geq K$ for some $K \in \mathbb{R}$. Set $q = p/(p-1)$. Then
\begin{equation}
\| dP_t u \|_{L^p(M)}\leq \frac{\e^{K^-t/2}}{\sqrt{t}} C_q^{1/q} \, \|u\|_{L^p(M)}
\end{equation}
for all $t>0$, where $C_q$ is the constant from the Burkholder-Davis-Gundy inequality.
\end{lemma}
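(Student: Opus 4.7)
The plan is to apply the Bismut formula globally (Brownian motion is non-explosive under the Ricci lower bound) with the simplest choice $h(s)=1-s/t$, then convert the pointwise probabilistic formula into an $L^p$ bound by combining H\"older's inequality, the BDG inequality, a $\mathscr{Q}$-bound coming from Ricci, and the sub-Markovianity of $P_t$.

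First I would invoke Bismut's formula (the non-localized version, since under $\Ric\ge K$ the $L$-diffusion with $Z=0$ is non-explosive, so no cut-off is needed) with $h(s) = 1 - s/t$, $\dot h(s) = -1/t$, to obtain
\begin{equation}
(dP_t u)_x(v) \;=\; \frac{1}{t}\,\E\!\left[u(X_t(x))\int_0^t \langle \mathscr{Q}_s v, dB_s\rangle\right]
\end{equation}
for every $v\in T_xM$. For $|v|=1$, H\"older's inequality with conjugate exponents $p,q$ then yields
\begin{equation}
|(dP_t u)_x(v)| \;\leq\; \frac{1}{t}\,\bigl(P_t|u|^p(x)\bigr)^{1/p}\,\Bigl(\E\bigl|N_t(v)\bigr|^{q}\Bigr)^{1/q},
\end{equation}
where $N_s(v) := \int_0^s \langle \mathscr{Q}_r v, dB_r\rangle$ is a real-valued martingale with quadratic variation $\langle N(v)\rangle_s = \int_0^s |\mathscr{Q}_r v|^2\,dr$.

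Next I would estimate $\mathscr Q$: differentiating $|\mathscr{Q}_r v|^2$ and using the defining ODE together with $\Ric\geq K$ gives $|\mathscr{Q}_r v|^2 \leq \e^{-Kr}|v|^2 \leq \e^{K^- r}|v|^2$, so $\langle N(v)\rangle_t \leq t\,\e^{K^- t}|v|^2$. The Burkholder-Davis-Gundy inequality then gives
\begin{equation}
\Bigl(\E\bigl|N_t(v)\bigr|^{q}\Bigr)^{1/q} \;\leq\; C_q^{1/q}\,\Bigl(\E\,\langle N(v)\rangle_t^{q/2}\Bigr)^{1/q} \;\leq\; C_q^{1/q}\,\sqrt{t}\,\e^{K^- t/2}|v|.
\end{equation}
Taking the supremum over unit $v$ delivers the pointwise bound
\begin{equation}
|(dP_t u)|(x) \;\leq\; \frac{C_q^{1/q}\,\e^{K^- t/2}}{\sqrt{t}}\,\bigl(P_t|u|^p(x)\bigr)^{1/p}.
\end{equation}

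Finally, I would raise this to the $p$-th power and integrate over $M$. Since $Z=0$, the semigroup $P_t$ is symmetric on $L^2(M)$ with respect to Riemannian volume, and sub-Markovian ($P_t 1 \leq 1$), hence $\int_M P_t f\,dx \leq \int_M f\,dx$ for any nonnegative measurable $f$ (apply self-adjointness against the test function $1$). Taking $f = |u|^p$ gives
\begin{equation}
\|dP_t u\|_{L^p(M)}^p \;\leq\; \left(\frac{C_q^{1/q}\,\e^{K^- t/2}}{\sqrt{t}}\right)^p\!\int_M |u|^p\,dx,
\end{equation}
and extracting the $p$-th root yields the claim. The only step that needs a little care is the sub-Markovian/symmetry argument passing from the pointwise estimate to the integrated one; everything else is a direct calculation from Bismut + BDG + the Ricci ODE bound.
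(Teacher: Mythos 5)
Your proposal is correct and follows essentially the same route as the paper: Bismut's formula with $h(s)=1-s/t$, H\"older, the BDG inequality, the Ricci-based bound $|\mathscr{Q}_r|\leq \e^{K^-r/2}$, and then integration in $x$ using symmetry and sub-Markovianity of $P_t$ (the paper phrases this last step as passing to an exhausting sequence of regular domains, but it is the same point, valid via the symmetry of the heat kernel rather than $L^2$ self-adjointness tested against the non-$L^2$ function $1$).
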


\begin{proof}
By the Bismut formula with $h(r) = (t-r)/t$, H\"{o}lder's inequality and the Burkholder-Davis-Gundy inequality, we have
\begin{equation}
|dP_t u|(x) \leq \frac{1}{t}\E\left[ u^p(X_{t}(x))\right]^{1/p} \,\E\left[ \left( \int_0^{t} \langle \mathscr{Q}_r,dB_r\rangle \right)^q\right]^{1/q}\leq  \frac{\e^{K^-t/2}}{\sqrt{t}}C_q^{1/q}\E\left[ u^p(X_{t}(x))\right]^{1/p}
\end{equation}
for all $t >0$ and $x \in M$. Consequently
\begin{equation}
\| dP_t u \|_{L^p(D)}\leq \frac{\e^{K^-t/2}}{\sqrt{t}} C_q^{1/q}  \|u\|_{L^p(M)}
\end{equation}
for any regular domain $D$. The result follows by taking an exhausting sequence of such domains.
\end{proof}

A classical result of Strichartz \cite[Corollary~2.5]{Strichartz} states that if $u,\Delta u \in L^2$ then $du \in L^2$. Using a Gagliardo-Nirenberg inequality, G\"{u}neysu and Pigola have recently proved the following extension \cite[Theorem~4]{GPlp}: for a geodesically complete manifold, if $p\in (1,\infty)$ and $u,\Delta u \in L^p$ then
\begin{equation}
\| du \|_p^2 \leq C \|u\|_p \|\Delta u\|_p + \max\lbrace p-2,0\rbrace \|u\|_p\|\nabla^2 u\|_p
\end{equation}
where $C$ is a constant depending only on $p$, provided the right-hand side is finite. Supposing an $L^p$-Calderon-Zygmund inequality and the existence of a sequence of Hessian cut-off functions, they furthermore proved that $du \in L^p$ if and only if $\max\lbrace p-2,0\rbrace \nabla^2 u \in L^p$. Consequently, the Ricci curvature lower bound appearing in the following global integral version of Theorem \ref{thm:mainthm} is actually redundant if $p\in (1,2]$, but in general can be used to derive the zero-mean value condition given by Corollary \ref{cor:meanzero} in the introduction:

\begin{thm}\label{thm:mainthm3}
Suppose $u \in C^2(M)$ with $u,\, \Delta u \in L^p(M)$ for some $p>1$. Suppose $\Ric \geq K$ for some $K \in \mathbb{R}$. Set $q = p/(p-1)$ and $\delta >0$. Then
\begin{equation}\label{eq:localinest}
\| du \|_{L^p(M)} \leq C_q^{1/q} \e^{\frac{K^-}{8\delta^2}} \left( 2\delta\, \| u \|_{L^p(M)} + \frac{1}{2\delta}\, \| \Delta u \|_{L^p(M)}\right)
\end{equation}
where $C_q$ is the constant from the Burkholder-Davis-Gundy inequality.
\end{thm}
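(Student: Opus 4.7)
The plan is to imitate the proof of Theorem \ref{thm:mainthm} on the whole manifold, with Lemma \ref{lem:intlem} playing the role of Proposition \ref{prop:prelimest}. Since $Z=0$ and $\Ric \geq K$, the Brownian motion on $M$ is non-explosive, so the exit time $\tau$ from a domain disappears in the limit of an exhausting sequence and the two semigroups $P^1$ and $P^2$ of Section \ref{sec:functions} both reduce to the heat semigroup $P_t$ on $M$. In particular, applying equation \eqref{eq:expito2} on an exhausting sequence of regular domains and passing to the limit (using that $u,\Delta u\in L^p$ ensures adequate integrability) yields the global identity
\begin{equation}
u = P_t u - \tfrac12\int_0^t P_s(\Delta u)\,ds
\end{equation}
for every $t>0$.

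Next I would differentiate this identity and take $L^p$ norms, obtaining
\begin{equation}
\|du\|_{L^p(M)} \leq \|dP_t u\|_{L^p(M)} + \tfrac12 \int_0^t \|dP_s(\Delta u)\|_{L^p(M)}\,ds.
\end{equation}
Lemma \ref{lem:intlem}, applied once to $u$ and once (under the integral) to $\Delta u$, bounds both quantities by $C_q^{1/q}\,e^{K^- s/2}/\sqrt s$ times the relevant $L^p$-norm. Using the crude estimate $\int_0^t e^{K^- s/2}/\sqrt s\, ds \leq 2\sqrt t\, e^{K^- t/2}$ then gives
\begin{equation}
\|du\|_{L^p(M)} \leq C_q^{1/q}\, e^{K^- t/2}\Bigl( t^{-1/2}\|u\|_{L^p(M)} + \sqrt t\,\|\Delta u\|_{L^p(M)}\Bigr).
\end{equation}
The choice $t = 1/(4\delta^2)$ turns the bracket into exactly $2\delta\|u\|_{L^p} + (2\delta)^{-1}\|\Delta u\|_{L^p}$ and the exponent into $K^-/(8\delta^2)$, which is precisely the stated inequality.

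The only step requiring real care is the transition from the local identity \eqref{eq:expito2} on a regular domain to the global one on $M$: I would fix an exhausting sequence $D_n \uparrow M$ of regular domains and let $\tau_n$ denote the corresponding exit times; non-explosion gives $\tau_n \to \infty$ almost surely, and the integrability of $u$ and $\Delta u$ combined with Lemma \ref{lem:intlem} (applied on each $D_n$) provides the dominated-convergence estimates needed to pass the identity and its $x$-derivative through the limit. Once this is in place, the rest is just the calculation sketched above; I expect the probabilistic differentiation and the Bismut-formula bounds of Lemma \ref{lem:intlem} to do all of the heavy lifting.
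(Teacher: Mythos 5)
Your proof is correct and takes essentially the same route as the paper: the global analogue of \eqref{eq:expito2}, differentiation in $x$, Minkowski, Lemma \ref{lem:intlem} applied to both terms (with $\int_0^t \e^{K^-s/2}/\sqrt{s}\,ds \le 2\sqrt{t}\,\e^{K^-t/2}$ absorbed into the exponential), and the choice $t=1/(4\delta^2)$. The paper glosses over the passage from the domain semigroups $P^1_t,P^2_t$ to the global $P_t$, which you spell out via an exhausting sequence and non-explosion; that extra care is warranted but does not change the argument.
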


\begin{proof}
By It\^{o}'s formula we have
\begin{equation}
du(x) = dP^1_tu(x) - \int_0^t dP^2_s(Lu)(x) \,ds
\end{equation}
and therefore by Minkowski's inequality
\begin{equation}
\| du \|_{L^p(M)} \leq \| dP_t u\|_{L^p(M)} + \int_0^t \| dP_s(Lu)\|_{L^p(M)} \,ds.
\end{equation}
By Lemma \ref{lem:intlem} it follows that
\begin{equation}
\| du \|_{L^p(M)} \leq C_q^{1/q} \e^{K^-t/2} \left( \frac1{\sqrt{t}}\, \| u \|_{L^p(M)} + \sqrt{t}\, \| \Delta u \|_{L^p(M)}\right)
\end{equation}
from which the result follows by setting $t = 1/4\delta^2$.
\end{proof}

\section{Extension to differential forms}\label{sec:forms}

\subsection{ }
Our results can also be extended to differential forms, for which we assume $Z=0$ for simplicity. So $X(x)$ now denotes a Brownian motion on $M$ starting at $x$.

Denote by $\Omega^p(M)=\Gamma(\Lambda^p T^\ast M)$ the space of differential forms of degree $p$, by $d$ the exterior derivative, by $\delta = d^\ast$ the codifferential and consider the Hodge Laplacian $\Delta:=-(d +\delta)^2$, also known as the Laplace-de Rham operator, acting on $\Omega^p(M)$. The Hodge Laplacian $\Delta$ is related to the connection Laplacian $\square = \trace \nabla^2$ by the Weizenb\"{o}ck formula $\Delta = \square - R_p$ where $R_p \in \Gamma(\End (\Lambda^p T^\ast M))$ is a symmetric field of endomorphisms (for which a precise formula is given by, for example, \cite[Prop.~A.7]{DriverThal_2002}). We then define a process $Q$ acting on, say, $\Lambda^q T^\ast_xM$, to be the $\End(\Lambda^q T^\ast_xM)$-valued solution to the ordinary differential equation
\begin{equation}
\frac{d}{dt} Q_t = -\frac12 Q_t //^{-1}_t R_q //_t, \quad Q_0 = \id_{\Lambda^q T^\ast_x M},
\end{equation}
along the paths of $X(x)$. In terms of the process $Q$ and for $\alpha \in \Omega^p(M)$ we then have the two semigroups
\begin{equation}
P_t^1 \alpha(x) = \mathbb{E}\left[Q_{t\wedge \tau} //_{t\wedge \tau}^{-1}\alpha(X_{t\wedge \tau}(x))\right], \quad P_t^2 \alpha(x) = \mathbb{E}\left[1_{\lbrace t < \tau \rbrace} Q_t //_t^{-1}\alpha(X_t(x))\right]
\end{equation}
as before, which solve the diffusion equation $(\partial_t - \frac12\Delta)\alpha_t=0$ on $D$ with $\alpha_0 = \alpha$ and boundary conditions $P^1_t\alpha \vert_{\partial D} = \alpha\vert_{\partial D}$ and $P^2_t\alpha\vert_{\partial D} = 0$, respectively.

Denote by $\mathscr{Q}_t$ the transpose of $Q_t$ and by $\mathscr{R}_p$ the transpose of $R_p$. Further, for $e \in T_xM$ denote by $C_e$ the creation operator (exterior product) and by $A_e$ the annihilation operator (interior product); $A_e$ is the adjoint of $C_e$. Then, for an orthonormal basis $\lbrace e_i \rbrace_{i=1}^n$, we have
\begin{equation}
\mathscr{R}_p = -\sum_{i,j=1}^n R(e_j,e_i)C_{e_j}A_{e_i}
\end{equation}
where $R(e_j,e_i)$ is the curvature tensor acting on $p$-forms (see \cite[Lemma~A.9]{DriverThal_2002}). Then for $\alpha \in \Omega^p(M)$ there are, according to \cite{DriverThal_2002}, the Bismut formulae
\begin{align}
(dP^1_t\alpha)_x(v) &= -\mathbb{E}\left[ \Bigg\langle Q_{{t\wedge \tau}} //_{t\wedge \tau}^{-1} \alpha (X_{t\wedge \tau}(x)),\int_0^{t\wedge \tau} \mathscr{Q}_s^{-1} (A_{dB_s}\mathscr{Q}_s \dot{h}(s) v)\Bigg\rangle \right],\label{eq:derforone}\\
(dP^2_t\alpha)_x(v) &= -\mathbb{E}\left[ \Bigg\langle 1_{\lbrace t < \tau\rbrace}  Q_{t} //_{t}^{-1} \alpha (X_{t}(x)),\int_0^{t} \mathscr{Q}_s^{-1} (A_{dB_s}\mathscr{Q}_s \dot{h}(s) v)\Bigg\rangle \right]\label{eq:derfortwo}
\end{align}
for $v \in \Lambda^{p+1}T_xM$ and
\begin{align}
(\delta P^1_t\alpha)_x(v) &= \mathbb{E}\left[ \Bigg\langle Q_{{t\wedge \tau}} //_{t\wedge \tau}^{-1} \alpha (X_{t\wedge \tau}(x)), \int_0^{t\wedge \tau} \mathscr{Q}_s^{-1} ( C_{dB_s}\mathscr{Q}_s \dot{h}(s) v )\Bigg\rangle \right],\label{eq:divforone}\\
(\delta P^2_t\alpha)_x(v) &= \mathbb{E}\left[ \Bigg\langle 1_{\lbrace t < \tau\rbrace} Q_{t} //_{t}^{-1} \alpha (X_{t}(x)), \int_0^{t} \mathscr{Q}_s^{-1} ( C_{dB_s}\mathscr{Q}_s \dot{h}(s) v )\Bigg\rangle \right]\label{eq:divfortwo}
\end{align}
for $v \in \Lambda^{p-1} T_xM$, where the process $h$ is as in the previous section. Note that $\mathscr{R}_1 = \Ric$ and $\mathscr{R}_0 = 0$, so for $p=0$ formulae \eqref{eq:derforone} and \eqref{eq:derfortwo} reduce to formulae \eqref{eq:bisone} and \eqref{eq:bistwo} (with $Z=0$).

Now set
\begin{align}
\ovl{K}_p &:= \sup \big\lbrace \langle\mathscr{R}_p(v),v\rangle: v \in \Lambda^pTD,\ |v|=1 \big\rbrace \\
\unl{K}_p &:= \inf \big\lbrace \langle\mathscr{R}_p(v),v\rangle: v \in \Lambda^pTD,\ |v|=1 \big\rbrace \\
\unl{K}_{p\pm 1} &:= \inf \big\lbrace \langle\mathscr{R}_{p\pm 1}(v),v\rangle: v \in \Lambda^{p\pm 1}TD,\ |v|=1 \big\rbrace
\end{align}
with the metric is defined on the exterior algebra in the usual way.

\begin{thm}\label{thm:estforforms}
Suppose $D_0 \subset D$ are regular domains with $\alpha\in \Omega^p(M)$. Set $r_0 = \rho(D_0,\partial D)$ and $\delta >0$. Then
\begin{align}
\sup_{D_0}|d\alpha| &\leq C_{p,+}\left( \frac{2\delta}{r_0} \sup_D |\alpha|  + \frac{r_0}{2\delta} \sup_D |\Delta \alpha| \right) \label{eq:derestone}\\
\sup_{D_0}|\delta \alpha| &\leq C_{p,-}\left( \frac{2\delta}{r_0} \sup_D |\alpha|  + \frac{r_0}{2\delta} \sup_D |\Delta \alpha| \right)\label{eq:deresttwo}
\end{align}
where
\begin{equation}
C_{p,\pm}:= \exp\left(\frac{\pi r_0}{16\delta^2} \left( 2 \sup_D |Z|+\sqrt{(n-1)K^-_0}\right) + \frac{\pi^2 (n+3)}{32\delta^2}+\frac{r_0^2 \left(\unl{K}_p+(\ovl{K}_p + \unl{K}_{p\pm 1})^-\right)}{8\delta^2}\right).
\end{equation}
\end{thm}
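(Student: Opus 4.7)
The plan is to mimic the proof of Theorem \ref{thm:mainthm} step by step, now using the form-valued Bismut formulae \eqref{eq:derforone}--\eqref{eq:divfortwo}. First, since the semigroups $P^1_t,P^2_t$ both solve $(\partial_t - \tfrac12\Delta)\alpha_t = 0$ on $D$ with the boundary conditions stated in Section \ref{sec:forms}, It\^o's formula yields the form-valued analogue of \eqref{eq:expito2}:
\begin{equation*}
\alpha(x) = P^1_t\alpha(x) - \tfrac12\int_0^t P^2_s(\Delta\alpha)(x)\,ds.
\end{equation*}
Applying $d$ (respectively $\delta$) and the triangle inequality then reduces the estimation of $\sup_{D_0}|d\alpha|$ and $\sup_{D_0}|\delta\alpha|$ to supremum bounds, over $x\in D_0$, on $|dP^i_t\alpha|(x)$ and $|\delta P^i_t\alpha|(x)$ for $i=1,2$.

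The next step is to establish the form-valued analogue of Proposition \ref{prop:prelimest}: taking $D$ to be the ball of radius $r_0$ centred at $x$ and choosing $h$ and $\phi$ as in Lemmas \ref{lem:est} and \ref{lem:lemball} with $Z=0$, Cauchy--Schwarz together with It\^o's isometry, applied to \eqref{eq:derforone}--\eqref{eq:divfortwo}, produces the estimate
\begin{equation*}
|dP^i_t\alpha|(x)\vee|\delta P^i_t\alpha|(x) \leq \sup_D|\alpha|\,\tfrac{1}{\sqrt t}\left(\tfrac{c\,t\,\e^{\left(\unl K_p + (\ovl K_p + \unl K_{p\pm 1})^-\right)t}}{1-\e^{-ct}}\right)^{1/2},
\end{equation*}
with $c$ exactly the quantity appearing in Proposition \ref{prop:prelimest}. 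The outer factor $Q_{t\wedge\tau}//^{-1}_{t\wedge\tau}\alpha(X_{t\wedge\tau})$ contributes $\unl K_p$ to the exponent via an operator-norm bound on $Q$ on $p$-forms, read off from its defining ODE; the martingale integral contributes $(\ovl K_p + \unl K_{p\pm 1})^-$ via an operator-norm bound on the composition $\mathscr{Q}_s^{-1}A_{dB_s}\mathscr{Q}_s$ (respectively $\mathscr{Q}_s^{-1}C_{dB_s}\mathscr{Q}_s$), in which $\mathscr{Q}_s$ acts on $(p\pm 1)$-forms and $\mathscr{Q}_s^{-1}$ on $p$-forms, combined with the pointwise bounds $|A_e|_{\mathrm{op}}=|C_e|_{\mathrm{op}}\leq|e|$ and the $L^2$-estimate on $\dot h$ supplied by Lemma \ref{lem:est}.

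With this preliminary estimate in hand, the remainder is a mechanical transcription of the proof of Theorem \ref{thm:mainthm}: substituting into the It\^o-derived inequality, exploiting that $s\mapsto cs/(1-\e^{-cs})$ is non-decreasing together with $ct/(1-\e^{-ct})\leq \e^{ct}$ to bound the resulting time integral, and finally optimising by setting $t=r_0^2/(4\delta^2)$ produces \eqref{eq:derestone} and \eqref{eq:deresttwo} with the stated constants $C_{p,\pm}$. The principal obstacle is the careful operator-norm bookkeeping across form spaces of different degrees, in particular verifying that the exponent arises in the precise form $\unl K_p + (\ovl K_p + \unl K_{p\pm 1})^-$ rather than a cruder combination; this rests on the symmetry of $R_p$ and the Weitzenb\"ock-type interaction between the creation/annihilation operators and the curvature endomorphisms on adjacent form spaces. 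Once this point is settled, the rest of the argument follows the scalar proof verbatim.
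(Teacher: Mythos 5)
Your proof follows the paper's own strategy precisely: establish the form-valued analogue of the scalar It\^o identity $\alpha = P^1_t\alpha - \tfrac12\int_0^t P^2_s(\Delta\alpha)\,ds$, reduce via the triangle inequality to bounding $|dP^i_t\alpha|$ and $|\delta P^i_t\alpha|$, apply the Driver--Thalmaier Bismut formulae \eqref{eq:derforone}--\eqref{eq:divfortwo} together with Lemmas \ref{lem:est} and \ref{lem:lemball}, and optimize at $t = r_0^2/(4\delta^2)$. The paper is in fact terser than your account --- after deriving \eqref{eq:ineqinthm} it simply says the result follows ``as in the proof of Proposition \ref{prop:prelimest}'' --- so the operator-norm bookkeeping across form degrees that you flag as the principal obstacle, and in particular the derivation of the exact exponent $\unl{K}_p+(\ovl{K}_p+\unl{K}_{p\pm1})^-$, is also left to the reader there.
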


\begin{proof}
We will prove inequality \eqref{eq:derestone} using formulae \eqref{eq:derforone} and \eqref{eq:divforone}. The proof of inequality \eqref{eq:deresttwo}, using formulae \eqref{eq:derfortwo} and \eqref{eq:divfortwo}, is nearly identical. By It\^{o}'s formula, we have
\begin{equation}
Q_{t\wedge \tau} //_{t\wedge \tau}^{-1}\alpha(X_{t\wedge \tau}(x)) - \alpha(x) = \int_0^{t\wedge \tau} Q_s //_s^{-1}\nabla_{//_s dB_s}\alpha(X_s(x)) + \frac12\int_0^{t\wedge \tau} Q_s //_s^{-1}\Delta\alpha(X_s(x))\,ds.
\end{equation}
Taking expectations and differentiating we obtain
\begin{equation}\label{eq:ineqinthm}
|d\alpha|(x) \leq |d P^1_t \alpha|(x)+\frac12\int_0^t|dP^2_s \Delta \alpha|(x)\,ds.
\end{equation}
By formulae \eqref{eq:derforone} and \eqref{eq:divforone} and Lemmas \ref{lem:est} and \ref{lem:lemball}, as in the proof of Proposition \ref{prop:prelimest}, the result follows (like the proof of Theorem \ref{thm:mainthm} for functions).
\end{proof}

Note that for $p=0$ inequality \eqref{eq:derestone} reduces to \eqref{eq:mainthmest} (with $Z=0$). Generalizations of the integral estimates in Section \ref{sec:intestimates} can also easily be obtained.

\subsection{}
Replacing $\delta$ with $\delta \sqrt{\lambda}$ in Theorem \ref{thm:estforforms} we obtain the following estimates on the exterior derivative and codifferential of eigenforms of the Hodge Laplacian:

\begin{cor}\label{cor:eigestforforms}
Suppose $D_0 \subset D$ are regular domains with $\alpha\in \Omega^p(M)$ satisfying $\Delta \alpha = -\lambda \alpha$ for some $\lambda >0$. Then
\begin{align}
\sup_{D_0} |d\alpha| &\leq  \sqrt{\lambda} C_{p,+}^{1/\lambda}\left( \frac{2\delta}{r_0}+\frac{r_0}{2\delta}\right) \,\sup_D| \alpha|\label{eq:ederestone}\\
\sup_{D_0} |\delta \alpha| &\leq \sqrt{\lambda} C_{p,-}^{1/\lambda}\left( \frac{2\delta}{r_0}+\frac{r_0}{2\delta}\right)\,\sup_D| \alpha|\label{eq:ederesttwo}
\end{align}
where the constants $C_{p,\pm}$, $r_0$ and $\delta$ are given as in Theorem \ref{thm:estforforms}.
\end{cor}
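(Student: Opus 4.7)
The plan is to mimic verbatim the argument used for Corollary \ref{cor:mainthm2}, where the $0$-form case was handled by rescaling $\delta$ and invoking the eigenvalue equation. Since the hypotheses of Theorem \ref{thm:estforforms} hold for every $\delta>0$, I have full freedom to substitute $\delta \mapsto \delta \sqrt{\lambda}$ in the two estimates \eqref{eq:derestone} and \eqref{eq:deresttwo}.

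Concretely, I would begin by writing down inequality \eqref{eq:derestone} with $\delta$ replaced by $\delta\sqrt{\lambda}$, obtaining
\begin{equation}
\sup_{D_0}|d\alpha| \leq \widetilde{C}_{p,+}\left( \frac{2\delta\sqrt\lambda}{r_0}\sup_D|\alpha| + \frac{r_0}{2\delta\sqrt\lambda}\sup_D|\Delta\alpha|\right),
\end{equation}
where $\widetilde{C}_{p,+}$ denotes the constant from Theorem \ref{thm:estforforms} evaluated at $\delta\sqrt\lambda$ in place of $\delta$. The next step is to substitute the eigenform identity $\Delta\alpha=-\lambda\alpha$, so that $\sup_D|\Delta\alpha|=\lambda\sup_D|\alpha|$, and to factor $\sqrt\lambda$ out of both terms in the bracket, giving exactly the geometric factor $\bigl(\tfrac{2\delta}{r_0}+\tfrac{r_0}{2\delta}\bigr)\sup_D|\alpha|$ multiplied by $\sqrt{\lambda}$.

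It then remains to verify that $\widetilde{C}_{p,+}=C_{p,+}^{1/\lambda}$. Inspecting the formula for $C_{p,\pm}$ in Theorem \ref{thm:estforforms}, every term inside the exponent has $\delta^2$ in the denominator and no other $\delta$-dependence. Replacing $\delta^2$ by $\delta^2\lambda$ therefore multiplies the entire exponent by $1/\lambda$, so indeed $\widetilde{C}_{p,\pm}=C_{p,\pm}^{1/\lambda}$. This yields \eqref{eq:ederestone}, and the estimate \eqref{eq:ederesttwo} for $\delta\alpha$ is derived in precisely the same way from \eqref{eq:deresttwo}.

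I do not anticipate any genuine obstacle: once Theorem \ref{thm:estforforms} is available the argument reduces to an algebraic rescaling together with the trivial bound $|\Delta\alpha|=\lambda|\alpha|$. The only point that deserves a moment of care is checking that all $\delta$-dependence in the constant sits in $\delta^{-2}$ factors, so that the substitution cleanly produces an exponent proportional to $1/\lambda$; this is immediate from the explicit expression for $C_{p,\pm}$.
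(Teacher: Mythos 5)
Your proposal is correct and is exactly the paper's argument: the paper states before the corollary that it is obtained by "Replacing $\delta$ with $\delta\sqrt{\lambda}$ in Theorem \ref{thm:estforforms}," precisely the rescaling you carry out. Your verification that all $\delta$-dependence in $C_{p,\pm}$ lies in $\delta^{-2}$ factors, so that the exponent scales by $1/\lambda$, correctly fills in the one computation the paper leaves implicit.
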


Note that instead of the Hodge Laplacian $\Delta$, we could just as well have worked with the semigroups generated by the connection Laplacian $\square$.

\subsection{}
Our approach to these estimates is easily adapted to different settings. For example, our results can be extended further to the abstract setting of vector bundles considered in \cite{DriverThal_2002}.

\proof[Acknowledgements]This work has been supported by the Fonds
National de la Recherche Luxembourg (FNR) under the OPEN scheme
(project GEOMREV O14/7628746), as well as by PUL AGSDE of the University of Luxembourg.
The first named author acknowledges support by NSFC (Grant No.~11501508)
and Zhejiang Provincial Natural Science Foundation of China (Grant
No. LQ16A010009).

\providecommand{\href}[2]{#2}

\end{document}